\newtheoremstyle{myplain}
{}
{}
{\itshape}
{}
{\bfseries}
{:}
{.5em}
{}
\theoremstyle{myplain}
\newtheorem{lem}{Lemma}
\newtheorem{defn}{Definition}
\newtheorem{thm}{Theorem}
\newtheorem*{notn}{Notation}
\newtheorem*{clm}{Claim}
\renewcommand{\L}{\mathcal{L}}
\newcommand{\M}{\mathcal{M}}
\newcommand{\N}{\mathcal{N}}
\newcommand{\R}{\mathcal{R}}
\renewcommand{\S}{\mathcal{S}}
\newcommand{\U}{\mathcal{U}}
\newcommand{\KK}{\mathbb{K}}
\newcommand{\NN}{\mathbb{N}}
\newcommand{\RR}{\mathbb{R}}
\newcommand{\ZZ}{\mathbb{Z}}
\renewcommand{\int}{\textnormal{int}} 
\newcommand{\bd}{\textnormal{bd}} 
\newcommand{\Con}{\textnormal{Con}} 
\newcommand{\omin}{\textnormal{o-min}} 
\newcommand{\T}{\textnormal{T}} 
\newcommand{\Th}{\textnormal{Th}} 
\title{\normalsize\textbf{THE NON-AXIOMATIZABILITY OF O-MINIMALITY}}
\author{\normalsize ALEX RENNET}
\date{}
\begin{document}

\pagestyle{fancy}
\fancyhead{}
\fancyfoot{}
\fancyhead[CO,CE]{Non-Axiomatizability \hfill \hspace{-1.25in}\textbf{Alex Rennet} \hfill  \thepage \ of \pageref*{Last Page}}

\maketitle
\vspace{-0.25in}
\begin{abstract}
Fix a language $\L$ extending the language of real closed fields by at least one new predicate or function symbol.  Call an $\L$-structure $\R$ \textit{pseudo-o-minimal} if it is (elementarily equivalent to) an ultraproduct of o-minimal structures.  We show that for any recursive list of $\L$-sentences $\Lambda$, there is a real closed field $\R$ satisfying $\Lambda$ which is not pseudo-o-minimal.  In particular, there are locally o-minimal, definably complete real closed fields which are not pseudo-o-minimal.  This answers negatively a question raised by Schoutens in \cite{Schoutens}, and shows that the theory $\T^{\omin}$ consisting of those $\L$-sentences true in \textit{all} o-minimal $\L$-structures, also called the \textit{theory of o-minimality (for $\L$)}, is not recursively axiomatizable.  
\end{abstract}


\section{Introduction}

In \cite{Ax}, Ax proved that the \textit{theory of finite fields}, consisting of all those sentences in the language of fields which are true in all finite fields, is recursively axiomatizable.  He showed first that the infinite fields which are (elementarily equivalent to) ultraproducts of finite fields, called \textit{pseudofinite fields}, are precisely those fields which are perfect, pseudoalgebraically closed and have an algebraic extension of each degree.  And second, he showed that these properties are all first-order definable by recursive axiom schemas. Another positive result along the same lines comes from \textit{pseudofinite linear orderings}: the theory of finite orderings (i.e. those $\L_{<}$-sentences true in all finite orderings) is axiomatized by the statements that the ordering is discrete and has a first and last element (see \cite{Va01}).  A linear ordering is elementarily equivalent to an ultraproduct of finite linear orderings if and only if it is finite or has order type $\omega + L \cdot \ZZ + \omega^{*}$ for some linear order $L$.  

Both of these cases fall into the following general framework: fix a language $\L$, let $\KK$ be a class of $\L$-structures and let $\Th(\KK)$ be the theory consisting of those $\L$-sentences which are true in all models $\M \in \KK$.  Then the class of models of $\Th(\KK)$ is exactly the class $\overline{\KK}$ obtained by closing $\KK$ under isomorphism, elementary equivalence and ultraproducts (this follows from \cite[Corollary 8.5.13]{Hodges}). Now, the axiomatizability results above say that with $\KK$ the class of finite fields, or the class of finite linear orderings, the theory $\Th(\KK)$ is recursively axiomatizable.  

\textit{O-minimality} is a condition on ordered structures which states that every definable subset of the line is a finite union of points and intervals (see \cite{VdD} for a thorough introduction to o-minimality). This property is \textit{not} first-order expressible, by which we mean either of two equivalent defintions. The first is that there is no axiom schema $(\varphi_{i})_{i\in I}$ of $\L$-sentences such that an $\L$-structure $\R$ is o-minimal if and only if $\R\models \varphi_{i}$ for all $i \in I$.  By Los' Theorem, this is equivalent to saying that an ultraproduct of o-minimal structures need not be o-minimal.  In fact, not only might such a structure not be o-minimal, it might not be NIP (see e.g. \hspace{-0.05in}\cite[Example 6.19]{FornasieroTame}).  We should note that a property being first-order in either of these senses is a condition that is separate from the condition of it being preserved under elementary equivalence, even though the two can easily be confused (the word `elementary' being so common does not make it any easier).  In some cases, these two conditions either obtain or fail to obtain together, but in the case of o-minimality, they are not the same: it was shown early on in the study of o-minimality (see \cite{KPS86}) that for any two elementarily equivalent structures, either both or neither is o-minimal.  

The moral of all this is that since o-minimality is not first-order in our sense(s), in order to axiomatize o-minimality in a way analogous to the situation with finite fields and finite linear orderings, we would need to collect a generating set of axioms for all the first-order consequences of o-minimality.  And again since o-minimality is not first-order, we can regard any such consequence as a (first-order) \textit{weakening} of o-minimality.

Many weakenings of o-minimality, such as \textit{weak o-minimality} (\cite{Macpherson-Marker-Steinhorn}), \textit{quasi-o-minimality} (\cite{BPW00}), \textit{d-minimality} (\cite{Mi05}), \textit{o-minimal open core} (\cite{DMS09}) etc., have been studied in the literature (for more, see \cite{VdD98,Toff-Voz,Mi01,FornasieroTame}).  However, since we are only interested in first-order weakenings, the two that are of particular interest to us here are the following two first-order conditions: \textit{local o-minimality (LOM)} (that for every definable subset of the line, and every point, there is a neighbourhood of that point where the definable set is a finite union of points and intervals)\footnote{If we strenthen this by allowing the point to possibly be $\pm\infty$, we get a property that Schoutens calls \textit{type completeness} in \cite{Schoutens}.  Local o-minimality, though prima facie weaker, is only weaker if the structure in question does not have a \textit{multiplicative} group structure, which will not be the case for us.} 
and \textit{definable completeness (DC)} (that every bounded definable subset of the line has a supremum).  Ordered fields satisfying both of these properties are real closed and have particularly nice definable sets: every definable $A \subseteq R^{1}$ has a discrete boundary which first, has no accumulation points in the topology on $R$, and second, is either finite, or has order-type $\omega + L\cdot\ZZ + \omega^{*}$ for some linear order $L$.  Since LOM and DC are expressible by first-order axiom schemas (this is an easy lemma), they are true in \textit{every} o-minimal structure. Thus, they are a starting point to look for a recursive axiomatization of o-minimality. In \cite{Schoutens}, Schoutens hypothesizes that LOM and DC together (from now on, just LOM+DC) possibly with a first-order variant of the pigeonhole principle for discrete definable sets, might be enough to axiomatize $\T^{\omin}$.  Separately, in \cite{FornasieroTame}, Fornasiero investigated LOM+DC fields and conjectured that the aforementioned version of the pigeonhole principle follows from LOM+DC.  We will show that LOM+DC, even (potentially) strengthened by this version of the pigeonhole principle, or in fact strengthened by \textit{any} recursive list of axioms, does not axiomatize $\T^{\omin}$:

\begin{thm}
Let $\L$ be a language extending the language of real closed fields with at least one new function or predicate symbol, and let $\Lambda$ be a recursive list of $\L$-sentences.  Then there is an $\L$-structure $\R_{\Lambda}^{\L}$, which satisfies LOM+DC+RCF+$\Lambda$ but is not elementarily equivalent to an ultraproduct of o-minimal $\L$-structures. 
\end{thm}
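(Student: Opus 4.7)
The plan is to reduce the theorem to finding, for a given recursive $\Lambda$, a sentence $\varphi \in \T^{\omin}$ with $T \nvdash \varphi$, where $T := \mathrm{LOM+DC+RCF}+\Lambda$. Since LOM+DC+RCF is recursively axiomatizable (by the first-order schemas spelled out in the paper) and $\Lambda$ is recursive, $T$ is r.e.\ and hence so is $\mathrm{Cn}(T)$. With such a $\varphi$, completeness produces a model of $T \cup \{\neg \varphi\}$, which we take as $\R_\Lambda^\L$: it satisfies $T$ but refutes $\varphi$, and since $\varphi$ belongs to $\T^{\omin}$ (hence holds in every o-minimal $\L$-structure and, by Los' Theorem, in every ultraproduct thereof), $\R_\Lambda^\L$ cannot be elementarily equivalent to an ultraproduct of o-minimal structures.

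To produce $\varphi$, I would use a G\"odel-style diagonalization exploiting the r.e.\ nature of $T$. In any LOM+DC+RCF structure equipped with the appropriate new symbol (for example a floor-like function extracted from $\L$), the natural numbers are definable, and so Robinson-style arithmetic is interpretable in $T$; this allows one to code a consistency-style sentence $\varphi_\Lambda$ (analogous to $\Con(T)$) for which a G\"odel- or Rosser-type argument gives $T \nvdash \varphi_\Lambda$. The more delicate task is to show that $\varphi_\Lambda \in \T^{\omin}$: o-minimal $\L$-structures do not themselves define $\NN$, so the arithmetic interpretation is not available ``from the inside''. I would handle this by formulating $\varphi_\Lambda$ through a polynomial encoding --- quantifying over all of $R$ and using polynomial constraints to pin down the would-be integer witnesses of inconsistency --- so that $\varphi_\Lambda$ is really a sentence in the language of real closed fields whose truth in any $\L$-structure is dictated purely by its RCF reduct, and hence holds in every o-minimal $\L$-structure automatically.

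The main obstacle I foresee is in simultaneously meeting the two requirements on $\varphi_\Lambda$. The sentence must encode enough syntactic content of $T$ for the diagonalization to force $T \nvdash \varphi_\Lambda$, yet remain robust enough that its truth in every o-minimal $\L$-structure is automatic. Because o-minimal $\L$-structures can vary enormously in what is definable (compare, for instance, $\RR_{\mathrm{alg}}$ to $\RR_{\mathrm{an},\exp}$), the coding must rely only on the ambient RCF operations and never depend on the particular o-minimal expansion. Verifying that the polynomial-coded consistency statement retains its G\"odelian unprovability --- while also being forced true in every o-minimal $\L$-expansion of a real closed field --- is where the real technical work lies; if this balancing act can be maintained, the theorem follows by the reduction outlined above.
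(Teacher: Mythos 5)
Your first paragraph is correct, and it is exactly the frame of the paper's own argument: since the pseudo-o-minimal structures are precisely the models of $\T^{\omin}$, it suffices to produce $\varphi \in \T^{\omin}$ with $T \nvdash \varphi$, where $T := \mathrm{LOM}+\mathrm{DC}+\mathrm{RCF}+\Lambda$. The gap is in how you propose to produce $\varphi_{\Lambda}$, and it is fatal on both ends. First, $T$ does \emph{not} interpret arithmetic, so no G\"odel--Rosser diagonalization can be run ``inside $T$''. In the case that matters ($\Lambda \subseteq \T^{\omin}$, a purported axiomatization), $T$ has o-minimal models: take $\overline{\RR}$ and interpret the new symbols of $\L$ trivially. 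In an o-minimal structure every definable discrete subset of the line is finite, so no formula can $T$-provably carve out a copy of $\NN$; more generally, no o-minimal structure even interprets Robinson arithmetic (a successor function would give a definable injection of an infinite definable set onto itself minus a point, which is impossible since definable bijections preserve the o-minimal Euler characteristic). Nothing in $T$ forces any symbol of $\L$ to be ``floor-like''; axioms forcing that would be false in every o-minimal structure, so they could not occur in $\Lambda$ anyway. Second, the proposed repair --- making $\varphi_{\Lambda}$ equivalent to a sentence of the pure ordered-field language --- is self-defeating: RCF is a complete theory, and every real closed field is the reduct of an o-minimal $\L$-structure (expand trivially), so any pure-RCF sentence lying in $\T^{\omin}$ is provable from RCF alone, hence from $T$, contradicting the requirement $T \nvdash \varphi_{\Lambda}$. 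So the witness $\varphi$ must use the new symbols essentially, and the ``balancing act'' you flag at the end is not merely delicate: it is impossible along the route you describe.

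The idea you are missing is the paper's relativization device. G\"odel's theorem is applied not to $T$ but to an auxiliary theory $\T$ in the language extended by the predicate $N$ and two ternary predicates $\alpha,\mu$, which says: the field is real closed, $(N,\alpha,\mu,<\upharpoonright_{N})$ is a model of relational PA with $\alpha,\mu$ the restrictions of the field operations, and --- crucially --- the LOM+DC boundary conditions and the sentences of $\Lambda$ hold in every \emph{cut-off} structure, i.e.\ after replacing $N$ by $N^{\leq x}$, for every $x$ (schemas (IV) and (V), via $\varphi \mapsto \varphi^{\leq x}$). This $\T$ is consistent (witnessed by $(\overline{\RR},\NN)$) and defines a model of PA outright, so $\T + \neg\Con(\T)$ has a model $(\R,\N)$. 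Cutting off at some $x$ above a nonstandard code for a proof of $0=1$ yields an $\L$-structure $(\R,\N^{\leq x})$ that satisfies LOM+DC+RCF+$\Lambda$ yet whose discrete definable set $\N^{\leq x}$ contains such a proof code. No structure elementarily equivalent to an ultraproduct of o-minimal structures can do this: there the interpretation of $N$ would be discrete and definable, hence finite in $\U$-most coordinates, hence a standard finite initial segment of $\NN$, and Los' Theorem would convert the nonstandard proof code into a standard proof of $0=1$ in $\T$, contradicting the consistency of $\T$. The relativization lets arithmetic live on an initial segment inside a model of the target theory without being available as a definable copy of $\NN$ --- exactly the reconciliation your polynomial-coding idea cannot achieve.
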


The rest of the paper has two parts: in the first, we set-up some terminology and review some necessary definitions and a lemma; in the second, we prove Theorem 1.

I would like to thank Ehud Hrushovski for discussions at the 2010 AMS Mathematics Research Community on the Model Theory of Fields during which he told me that G\"{o}del's Second Incompleteness Theorem implied that pseudofiniteness in general was not recursively axiomatizable.  I would also like to thank Jana Marikova for helpful comments on an earlier draft of this paper.

This paper is a condensed version of a chapter of my dissertation.  

\section{Preliminaries}

Let $\L$ extend the language of real closed fields, and let $\R$ be an $\L$-structure which is an expansion of a real closed field. For a subset $X\subset \R$, we define $X^{\leq r}:=\{x \in X \ | \ x\leq r\}$, and similarly for $X^{\geq r}$, $X^{<r}$ and $X^{>r}$.  We write $\bd(X)$ for the topological boundary of $X$.

The following lemma will be used later, and is equivalent to part of \cite[Theorem 3.14]{Schoutens}:

\begin{lem}
\label{lemDcbLOMDC}
If for every definable $X \subseteq \R$, we have that $\bd(X)$ is discrete, closed and bounded and (if non-empty) has a least and greatest element, then $\R$ satisfies LOM+DC.
\end{lem}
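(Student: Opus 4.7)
The plan is to first establish DC from the hypothesis and then deduce LOM using DC together with the boundary hypothesis.

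\textbf{Definable completeness.} Let $X \subseteq \R$ be non-empty, definable, and bounded above. I would consider the definable downward-closed set $Z = \{z \in \R : z < x \text{ for some } x \in X\}$, which is non-empty and shares an upper bound with $X$. By the hypothesis, $\bd(Z)$ is discrete, closed, bounded, and, when non-empty, has a maximum $M$. The argument is then that $M = \sup X$: any $r > M$ avoids $\bd(Z)$ and so lies in $\int(Z)$ or in $\int(\R \setminus Z)$, and combining the downward-closedness of $Z$ with the existence of a concrete upper bound for $X$ above $M$ one deduces that $Z \cap (M, \infty) = \emptyset$, giving $\sup X \leq M$; conversely, $M \in \overline{Z}$ together with downward-closedness of $Z$ yields $M \leq \sup X$, so $M = \sup X$.

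\textbf{Local o-minimality.} Given a definable $X$ and a point $p \in \R$, discreteness of $\bd(X)$ lets me choose $\epsilon > 0$ such that $\bd(X) \cap (p - \epsilon, p + \epsilon) \subseteq \{p\}$. Then every point of the punctured neighbourhood $(p - \epsilon, p) \cup (p, p + \epsilon)$ lies in $\int(X)$ or $\int(\R \setminus X)$. A standard supremum argument (now using the DC established above) shows that any open interval in $\R$ is definably connected, i.e.\ any definable clopen subset of such an interval is either empty or the whole interval. Applying this to each of the half-intervals $(p - \epsilon, p)$ and $(p, p + \epsilon)$ shows each is entirely in $X$ or entirely in $\R \setminus X$, so $X \cap (p - \epsilon, p + \epsilon)$ is a finite union of at most three pieces---possibly $\{p\}$ together with one or both half-intervals---giving LOM at $p$.

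\textbf{Main obstacle.} The delicate step is DC, specifically showing that $\bd(Z)$ is non-empty whenever $X$ is non-empty and bounded. The stated hypothesis by itself permits $\bd(Z) = \emptyset$, which would correspond to $Z$ being a ``definable cut'' in $\R$ with no supremum, so ruling this out requires invoking the hypothesis on auxiliary definable sets derived from $Z$---for example, suitable bounded modifications such as $Z \cup \{c\}$ for a fixed $c$ above any candidate cut, or intersections of $Z$ with bounded intervals straddling the putative cut---and extracting a contradiction from the assumption that such a cut exists. This is where the uniform control of the hypothesis over \emph{all} definable subsets of $\R$, as opposed to merely the set $Z$ itself, becomes essential.
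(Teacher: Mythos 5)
The obstacle you isolate in your last paragraph is not merely delicate --- it is fatal, and no choice of auxiliary definable sets can close it, because the lemma as stated is false. Let $\R$ be a non-archimedean real closed field, e.g.\ the Hahn field $\RR((t^{\QQ}))$ with $t$ a positive infinitesimal, and interpret the predicate $N$ as the convex hull $\mathcal{O}$ of $\QQ$ in $\R$, a proper convex valuation ring. It is a standard fact (via Cherlin--Dickmann quantifier elimination; such structures are the motivating examples in \cite{Macpherson-Marker-Steinhorn}) that $(\R,\mathcal{O})$ is weakly o-minimal: every definable subset of $\R$ is a finite union of convex sets. A convex set has at most two boundary points, and the boundary of a finite union is contained in the union of the boundaries, so \emph{every} definable $X \subseteq \R$ has finite boundary --- in particular discrete, closed, bounded, with least and greatest elements when non-empty. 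Thus the hypothesis of the lemma holds. Yet $\mathcal{O}$ itself is definable, non-empty, and bounded above by $1/t$, but has no supremum, so DC fails. This is exactly your scenario: a bounded, non-empty, definable clopen set (an unrealized definable cut) has empty boundary and satisfies the hypothesis vacuously, and in this structure so does every other definable set, so no contradiction can be extracted from any of them.

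Your attempt is in fact more careful than the paper's own proof, whose entire treatment of DC is the sentence that ``by the definition of $\bd(X)$, its greatest element is certainly the supremum of $X$''; this tacitly assumes both that $\bd(X) \neq \emptyset$ and that its greatest element is an upper bound of $X$, and both can fail under the stated hypothesis (in the structure above, $X = \{-1\} \cup \mathfrak{m}$, with $\mathfrak{m}$ the ideal of infinitesimals, has $\bd(X) = \{-1\}$ and no supremum at all). Your reduction to the downward closure $Z$ is correct as far as it goes, since for downward-closed $Z$ one genuinely has $\bd(Z) \subseteq \{\sup Z\}$; and your LOM step --- a punctured neighbourhood free of boundary points, plus definable connectedness of intervals, which DC supplies --- is sound, and is in substance what the paper does when it chooses $a < r < b$ using $\bd(X^{\leq r})$ and $\bd(X^{\geq r})$ and speaks of ``components'' (the paper, too, leaves the reliance on definable connectedness implicit). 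To make the lemma true, the hypothesis must be strengthened, for instance by requiring in addition that $\bd(X) \neq \emptyset$ whenever $X$ is definable, non-empty and proper. That condition is expressible by a first-order schema and holds in every o-minimal structure, so the corresponding strengthening of schema (IV) in Section 3 costs nothing; it rules out definable clopen cuts, and with it your $Z$-argument delivers DC, and then LOM, exactly as you outline.
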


\begin{proof}
Suppose that $X\subseteq \R$ is definable, and bounded above.  In order for DC to hold, we need only show that $X$ has a supremum in $\R$.  But, by the definition of $\bd(X)$, its greatest element is certainly the supremum of $X$. 

Before we show that LOM follows, note the following claim:

\begin{clm} For every non-empty definable $X \subsetneq \R$, every element of $\bd(X)$ has a unique successor and predecessor (except the greatest and least elements, respectively.)  
\end{clm}

Let $x \in \bd(X)$ be a non-least element, if such an element exists, and consider the set $\bd(X)^{<x}$: this set is definable, and since it equals its own boundary, it is the boundary of a definable set, and so has a greatest element by the assumption of the lemma, and the choice of $x$.  This element is the unique predeccessor of $x$ in $\bd(X)$.  Similarly, we can determine the unique successor of $x$ (if it is not the greatest element of $\bd(X)$).

Returning to the proof, we continue by showing LOM.  Let $X\subsetneq \R$ be non-empty and definable, let $r \in \R$, and consider $\bd(X^{\leq r})$: if this set has a greatest element less than $r$, define $a$ to be that element.  If the greatest element is $r$, define $a$ to be the predeccessor of $r$ in $\bd(X^{\leq r})$, if it exists.  If no such element exists, then $\bd(X^{\leq r}) \subset \{r\}$, and thus $(-\infty,r) \subset X$ or $X^{<r}=\emptyset$; and we can then define $a$ to be $r -1$.  Similarly, if $\bd(X^{\geq r}) \subset \{r\}$, define $b$ to be $r+1$, and otherwise define $b$ to be the least element of $\bd(X^{\geq r})^{>r}$.  By choosing $a$ and $b$ in this way, we have ensured that $X \cap (a,b)$ has at most two components, and since $r\in\R$ was arbitrary, LOM follows.
\end{proof}

\noindent \textbf{Terminology:} Though the word \textit{o-minimalistic} is used in \cite{Schoutens} to describe those structures elementarily equivalent to ultraproducts of o-minimal structures, the terminology \textit{pseudo-o-minimal} is more in line with examples like pseudofinite fields and pseudofinite orderings, so we shall use the latter from now on.\\

\noindent \textbf{Note:} In the language of real closed fields $\L_{RCF}$, any ordered field considered as an $\L_{RCF}$-structure with the usual interpretations of the symbols is o-minimal if and only if it is real closed.  Since being a real closed field is first-order, ultraproducts of real closed fields are real closed fields, and hence o-minimal.  Thus, in the bare field language, pseudo-o-minimal structures are o-minimal.  So, from now on we will assume that $\L$ extends $\L_{RCF}(N)$, the language of real closed fields extended by a new unary predicate $N$. 

\begin{notn} 
If $\M$ is an ultraproduct $\prod_{i \in I}\M_{i}/\U$, and $\U$ is an ultrafilter on $I$, then if $$\{i \in I \ | \ \M_{i} \textnormal{ \textit{has property} P}\} \in \U \textnormal{ \textit{for some property} P,}$$ we will say that ``$\U$-most'' index models $\M_{i}$ have property \textnormal{P}.
\end{notn}

Let $\KK^{\omin}$ be the class of all o-minimal $\L$-structures, and let $\T^{\omin}$ be the set of $\L$-sentences $\varphi$ such that $\M \models \varphi$ for all $\M \in \KK^{\omin}$.  
We call $\T^{\omin}$ the \textit{theory of o-minimality for }$\L$.  

Finally, for every one-variable $\L$-formula $\varphi(z)$, we fix a new variable $x$ not occuring in $\varphi(z)$ and make the following definition: 

\begin{defn}
\label{defnVarphiLeqX}
$\varphi^{\leq x}(z)$ is the result of taking $\varphi$, and whenever `$N(t)$' appears for some $\L$-term $t$, replacing it with `$N(t) \land t \leq x$'. 
\end{defn}

\section{Proof of Theorem 1}
\hspace{0.35in}Fix from now on a recursive list $\Lambda$ of $\L$-sentences which will be a purported axiomatization (with LOM+DC+RCF) of $\T^{\omin}$.

Let PA be the \textit{relational} theory of Peano Arithmetic.  That is, we consider PA in a language with relation symbols (only) for addition, multiplication and ordering.  Note that this is essentially equivalent to the usual theory PA in a language with function symbols for addition, multiplication, and successor, in that a model of relational PA, can be definitionally expanded to a model of ordinary PA, and vice versa. The main difference is that a substructure of a model of relational PA can be finite. 

From now on, extend $\L$ by two ternary predicate symbols $\alpha$ and $\mu$, and let T be the $\L$-theory consisting of the following informally stated, but nonetheless first-order axiom schemas (where $(R,+,\times,<,0,1, N, \alpha,\mu)$ is a model of the axioms):

\begin{enumerate}[(I)]
\item $\overline{R}:=(R,  +, \times, <, 0, 1)\models RCF$
\item $(N, \alpha, \mu, <\upharpoonright_{N},0,1) \models PA$
\item $\alpha = +\hspace{-0.04in}\upharpoonright_{N}$ and $\mu = \times\hspace{-0.04in}\upharpoonright_{N}$
\item $\mathbf{(LOM+DC)}_{\varphi(z) \in \L}  : (\overline{R}, N, \alpha, \mu) \models$ $\forall x, \ \bd(\varphi^{\leq x}(\R))$ is discrete, closed and bounded and (if non-empty) has a least and greatest element
\item $\mathbf{(\Lambda)}_{\psi \in \Lambda}  : (\overline{R},N,\alpha,\mu) \models \forall x \ \psi^{\leq x}$

\end{enumerate}

Unpacking this a bit, (I) contains all of the standard axioms for real closed fields, (II) states that all of the axioms for PA hold where all the quantifiers are relativized to the predicate $N$ (i.e. instead of ``$\forall x \ (...)$'', we have ``$\forall x \ (N(x) \rightarrow ...)$'' etc.), and where ``$+$'' is replaced by $\alpha$, and ``$\times$'' is replaced by $\mu$, (III) asserts that $\alpha$ and $\mu$ are strictly subsets of $N^{3}$, (IV) ensures that when the model $N$ of PA is restricted to any initial segment, the set defined by $\varphi$ in $R$ with this initial segment of $N$ has a discrete, closed and bounded boundary, and (V) forces every axiom in $\Lambda$ to hold in $R$ with $N$ again restricted to any fixed initial segment. \\

We will show that not only does T have a model, but we will then show that there is a model of T with a reduct that satisfies LOM+DC+RCF+$\Lambda$ but which could not possibly be an ultraproduct of o-minimal structures.

In order to accomplish this, we first note that T is consistent.  To see this, observe that the real field, $\overline{\RR} = (\RR, +, \times, <, 0, 1)$ with an added predicate for $\NN$ is a model of T: for (I), (II) and (III), this is clear; now for (IV) and (V) we can see that for any $r\in \RR$, $\NN^{\leq r}$ is a \textit{finite} initial segment of $\NN$, so this subset was definable inside $\overline{\RR}$ already; indeed, the partial ternary subsets corresponding to restricted multiplication and addition (i.e. the interpretations of $\mu$ and $\alpha$ respectively) on this initial segment could already be defined in $\overline{\RR}$ by the graphs of the restricted multiplication and addition functions on this initial segment.  Thus, $(\overline{\RR},\NN^{\leq r})$ is just a definitional expansion of $\overline{\RR}$.  Now, since $\overline{\RR}$ is o-minimal, $(\overline{\RR},\NN^{\leq r})$ thus satisfies all the first-order consequences of o-minimality; in particular, it satisfies LOM+DC+RCF+$\Lambda$.  Thus, $(\overline{\RR},\NN)\models$ T, so the theory T is consistent.

However, since every model of T interprets a model of PA (in fact, it defines one), G\"{o}del's Second Incompleteness Theorem applies, allowing us to conclude that $\T + \neg \Con(\T)$ is also consistent.  That is, there is a model $(\R,\N)$ of $\T + \neg \Con(\T)$.  In particular, $\N$ has a code, say the element $\alpha\in\N$, for a proof of $\neg \Con(\T)$.

But then, letting $\alpha<x \in \R$ be sufficiently large, we have that the code for the proof of $\neg \Con(\T)$ and the codes for any symbols occuring in its proof are contained in $\N^{\leq x}$.  Since $(\R,\N)$ satisfies (IV), we have that in $(\R,\N^{\leq x})$, the boundary of every definable subset of the line is discrete closed and bounded, and (if non-empty) has a least and greatest element; thus by Lemma \ref{lemDcbLOMDC}, $(\R,\N^{\leq x})$ satisfies LOM+DC.  And since $(\R,\N)$ satisfies (V), $(\R,\N^{\leq x})$ satisfies $\Lambda$.  Finally, since $\N^{\leq x}$ is an initial segment of a model $\N$ of PA with bounded portions of addition and multiplication, it is a $\Delta_{0}$-elementary substructure of $\N$ (by \cite[Theorem 2.7]{Kaye}).  Since $\alpha < x$ and $x$ is sufficiently large in $\N$, and since $\alpha$ being a code for a proof of $0=1$ in T is a $\Delta_{0}$-property of $\alpha \in \N^{\leq x}$, we have that $\N^{\leq x} \models \neg \Con(\T)$.

But $(\R,\N^{\leq x})$ could not possibly be pseudo-o-minimal.  Suppose for contradiction that it was elementarily equivalent to an ultraproduct of o-minimal structures: $$(\R,\N^{\leq x}) \equiv (\S,\M) = \prod_{i \in I}(\S_{i},\M_{i})/\U$$ with $\U$ a non-principal ultrafilter on $I$, and $\U$-most $(\S_{i}, \M_{i})$ o-minimal.
Then we would have $(\S,\M) \models \neg \Con(\T)$ by elementary equivalence. 
But since $\M$ is discrete, $\U$-most of the sets $\M_{i}$ must be discrete by Los' Theorem.  And since $\U$-most index models $(\S_{i},\M_{i})$ are o-minimal, $\U$-most $\M_{i}$ must then be finite, being discrete and definable.  
Since $\N^{\leq x}$ is an initial segment of a model of PA, so is $\M$.  Since $\U$-most of the $\M_{i}$ are finite, $\U$-most of them are finite initial segments; but being a finite initial segment is the same as being isomorphic to the structure $N_{n}$ consisting, for some $n\in\NN$, of the first $n$ elements of $\NN$, together with the graphs of addition, multiplication, and ordering restricted to this set.  That is, $\U$-most $\M_{i}$ are isomorphic, for some $n_{i}$ to the structure $N_{n_{i}}=(\{0,1,...,n_{i}\}, \alpha\upharpoonright_{N_{n_{i}}}, \mu\upharpoonright_{N_{n_{i}}}, <\upharpoonright_{N_{n_{i}}})$.

Finally, $\M\models\neg\Con(\T)$, so there is $\alpha\in\M$ such that $\alpha$ is a code for a proof of $0=1$ in T. But then for an index $i$ such that $(\S_{i},\M_{i})$ is o-minimal, and $\M_{i}$ is isomorphic to some $N_{n_{i}}$ as above, then the $i$-th coordinate of $\alpha$, i.e. the element $\alpha_{i}\in\M_{i}$, must be a code for a proof of $0=1$ in T as well.
But this implies that there is a \textit{standard} code for the proof of $\neg \Con(\T)$.  From the existence of a standard code for a proof we could recover an actual proof of $\neg\Con(\T)$.  Hence, T would actually be inconsistent, a contradiction.

\vspace{-0.2in}\begin{flushright}$\square$\end{flushright}\vspace{-0.25in}

\noindent\texttt{Email: \href{rennetad@math.berkeley.edu}{rennetad@math.berkeley.edu} }\\
\texttt{Web: \href{http://math.berkeley.edu/~rennetad}{http://math.berkeley.edu/$\sim$rennetad}}
\label{Last Page}
\end{document}